\documentclass[12pt]{amsart}

\usepackage{amsmath,amssymb,mathtools}
\usepackage{enumitem}
\usepackage[nocompress]{cite}
\usepackage{geometry}
\usepackage{hyperref}

\newtheorem{theorem}{Theorem}[section]
\newtheorem{proposition}[theorem]{Proposition}
\newtheorem{lemma}[theorem]{Lemma}
\newtheorem{problem}[theorem]{Problem}
\theoremstyle{definition}
\newtheorem{definition}[theorem]{Definition}
\newtheorem{remark}[theorem]{Remark}
\newtheorem{example}[theorem]{Example}

\numberwithin{equation}{section}
\newcommand{\Sc}{\mathrm{Sc}}
\newcommand{\R}{\mathbb{R}}
\DeclareMathOperator{\Hess}{Hess}
\DeclareMathOperator{\Ric}{Ric}
\DeclareMathOperator{\ind}{Ind}
\DeclareMathOperator{\Int}{Int}

\title[Positive scalar curvature on products of open manifolds]
{Positive Scalar Curvature on Products of Noncompact Manifolds}

\author{Lizhi Chen}
\address[Lizhi Chen]{School of Mathematics and Statistics, Lanzhou University}
\email{lizhi.chen.math@gmail.com}

\author{Kuntao Jin}
\address[Kuntao Jin]{Department of Mathematical Sciences, Tsinghua University}
\email{jkt25@mails.tsinghua.edu.cn}

\author{Milan Jovanovic}
\address[Milan Jovanovic]{Texas A\&M University}
\email{milankj@tamu.edu}

\begin{document}

\begin{abstract}
We prove that the product of three connected noncompact smooth manifolds
without boundary admits a complete metric of uniformly positive scalar
curvature. The proof uses the additivity of Morse indices on products
and an orientation-free version of Das's open Morse--surgery construction.
We also give an example of two open manifolds whose product admits no
complete metric of nonnegative scalar curvature.
\end{abstract}

\maketitle

\section{Introduction}

Gromov \cite[Section~4.7]{GromovFourLectures} posed two questions
about complete metrics on products of noncompact manifolds. The first asks for
pointwise positive scalar curvature on a product of two factors; the
second asks for a uniform positive lower bound on a product of three.
\begin{problem}\label{prob:products}
\begin{enumerate}[label=\textup{(\arabic*)},leftmargin=*,itemsep=.35em,topsep=.35em]
\item Let \(X_1\) and \(X_2\) be connected noncompact manifolds. Does
\(X_1\times X_2\) admit a complete Riemannian metric \(g\) with
\(\Sc_g>0\) everywhere? Here, no uniform positive lower bound is required.

\item Let \(X_1,X_2,X_3\) be connected noncompact manifolds. Does
\(X_1\times X_2\times X_3\) admit a complete Riemannian metric \(g\)
with \(\inf\Sc_g>0\)?
\end{enumerate}
\end{problem}
These questions are motivated by the following result.
\begin{theorem}[\cite{GL1983}]
    \begin{enumerate}[label=\textup{(\arabic*)},leftmargin=*,itemsep=.35em,topsep=.35em]
        \item $\mathbb T^n \times \mathbb R$ does not admit a complete metric with positive scalar curvature.
        \item $\mathbb T^n \times \mathbb R^2$ admits a complete positive scalar
            curvature metric, but not a complete metric of uniformly positive scalar curvature.
        \item $\mathbb T^n \times \mathbb R^3$ admits a complete metric of uniformly
            positive scalar curvature.
    \end{enumerate}
\end{theorem}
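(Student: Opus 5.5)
Since this is a cited theorem of Gromov--Lawson, I would prove its three parts separately, using the standard tools: minimal hypersurface descent and the Dirac operator with Callias/relative index for the obstructions, and warped products for the constructions.

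\emph{Part (3).} Write \(\mathbb R^3\) as a cone-like manifold. I would choose a rotationally symmetric metric \(dr^2+f(r)^2 g_{S^2}\) on \(\R^3\) with \(f(r)=\sin r\) near \(0\), so the metric is smooth at the origin. For large \(r\) I would take \(f\equiv 1\), giving the cylinder \(\R\times S^2\) with curvature \(\Sc=2\). Choosing \(f\) concave with \(|f'|\le 1\) gives
\[
\Sc = -4f''/f + 2(1-f'^2)/f^2 \ge c>0 ,
\]
since either \(f''<0\) or \(f'^2<1\) throughout the transition. The resulting metric is complete. Its product with the flat torus has scalar curvature bounded below by \(c\).

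\emph{Part (2).} For the existence statement, I would take the product of the flat \(\mathbb T^n\) with a metric on \(\R^2\) of positive Gauss curvature that is complete, such as a paraboloid. Its curvature decays at infinity, so \(\Sc>0\) pointwise but not uniformly.

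For the nonexistence of a uniformly positive metric, I would use a Gromov--Lawson band argument. Suppose \(\Sc\ge \sigma>0\). For large \(R\), consider a band \(\mathbb T^n\times[-R,R]\times S^1\) sitting inside \(\mathbb T^n\times(\R^2\setminus\text{disk})\). This region contains bands \(\mathbb T^{n+1}\times[-R,R]\) of width tending to infinity, which is impossible: the \(\mathbb T^{n+1}\)-band width inequality bounds the width by \(C/\sqrt\sigma\). Concretely, I would prove the width inequality by \(\mu\)-bubbles, or alternatively by the relative index of the spin Dirac operator twisted by almost-flat bundles. Both are available in all dimensions via the enlargeability of the torus. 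That said, the actual width inequality is the main obstacle here.

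\emph{Part (1).} Suppose \(g\) is a complete metric on \(\mathbb T^n\times\R\) with \(\Sc>0\). Since \(\mathbb T^n\) is enlargeable, I would use the Gromov--Lawson relative index theorem. The metric \(g\) agrees near the slice \(\mathbb T^n\times\{0\}\) with itself, but positivity fails to be uniform at infinity, so the index argument must be localized. The cleaner route in \(n+1\le 7\) is different: minimize area in the homology class \([\mathbb T^n\times\{0\}]\) inside a compact region \(\mathbb T^n\times[-a,a]\). To obtain barriers, I would perturb \(g\) by \(\mu\)-bubble weights using \(\Sc\ge\epsilon(a)>0\) on the compact region. This produces a stable hypersurface homologous to the torus that admits positive scalar curvature after a conformal change. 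Iterating the Schoen--Yau descent then reduces to \(\mathbb T^2\) with positive scalar curvature, contradicting Gauss--Bonnet. In general dimension, I would instead use enlargeability: pull back to large finite covers \(\mathbb T^n\times\R\), and pair the twisted Dirac operator with the Callias potential given by the \(\R\)-coordinate. The key step, and the main obstacle, is handling the lack of a uniform lower bound. I would argue that only a compact neighborhood matters for a Callias operator whose potential is large outside it, where \(\Sc\) is bounded below.
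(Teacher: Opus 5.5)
The paper gives no proof of this statement; it is quoted from Gromov--Lawson as motivation, so your sketch has to stand on its own. Parts (3) and the existence half of (2) are fine. The nonexistence half of (2) is also fine if you take the torical band-width inequality as a black box: completeness makes the distance between the two boundary tori of \(\mathbb T^n\times\{R_1\le|x|\le R_2\}\) tend to infinity as \(R_2\to\infty\). Note, though, that only the Dirac/Callias proofs (Zeidler, Cecchini) give this inequality in all dimensions; \(\mu\)-bubbles do not, contrary to what you write.

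\textbf{The genuine gap is in (1)}, exactly at the point that separates it from the uniform case.

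\emph{The \(\mu\)-bubble route.} You build the weight \(h\) on \(\mathbb T^n\times[-a,a]\) from \(\epsilon(a)=\min\Sc\) there. With only \(\Sc\ge\epsilon(a)\), the stability condition \(\Sc+\frac{m}{m-1}h^2-2|\nabla h|>0\) (\(m=n+1\)) forces, by ODE comparison with the \(\tan\) solution, a \(g\)-width of at least \(2\pi\sqrt{(m-1)/(m\,\epsilon(a))}\) for \(h\) to run from \(+\infty\) to \(-\infty\). Nothing controls how fast \(\epsilon(a)\to0\) compared with the \(g\)-width of \(\mathbb T^n\times[-a,a]\), so you get no barriers for any \(a\).

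The missing idea is to use a \emph{fixed} core \(\mathbb T^n\times[-1,1]\) on which \(\Sc\ge\epsilon\):
\begin{enumerate}[label=\textup{(\roman*)},leftmargin=*]
\item Let \(h\) change sign only in the core, with \(|\nabla h|<\epsilon/2\) there.
\item On each side outside the core, take the critical profile \(h=\pm\frac{2(m-1)}{m}(A-\rho)^{-1}\), where \(\rho\) is a smoothed distance to the core with \(|\nabla\rho|\le1\).
\end{enumerate}
This profile satisfies \(\frac{m}{m-1}h^2-2|\nabla h|\ge0\), so pointwise \(\Sc>0\) alone gives strict stability outside the core. Completeness supplies room for \(A\) large, which makes \(|h|\) and \(|\nabla h|\) small on the core. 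Your descent then goes through as stated for \(n+1\le7\).

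\emph{The Callias route.} It has the same hole, and your sketch misdiagnoses it. Outside a compact set the trouble is not just that \(\Sc\) is small. The almost-flat bundle \(E\) pulled back along \(\tilde{\mathbb T}^n\times\R\to\tilde{\mathbb T}^n\to S^n\) has small curvature only where the projection to \(\tilde{\mathbb T}^n\) is Lipschitz for \(g\), i.e.\ on compact sets; at infinity \(|\mathcal R^E|\) may be unbounded. You therefore need a potential with
\(\frac{\Sc}{4}+\phi^2-|d\phi|-|\mathcal R^E|>0\) everywhere, and bounded below by a positive constant at infinity. Concretely:
\begin{enumerate}[label=\textup{(\roman*)},leftmargin=*]
\item In the core, \(\phi\) should have small values and small slope relative to \(\epsilon\).
\item Outside the core, take \(1/\phi\) to be a positive \(\tfrac12\)-Lipschitz function of the distance to the core lying below \((2|\mathcal R^E|+2)^{-1/2}\). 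Such a function exists by inf-convolution, and it gives \(|d\phi|\le\phi^2/2\).
\item Only after fixing \(\phi\), choose the contraction constant \(\delta\) and the finite cover, so that \(|\mathcal R^E|\) is small on the compact region where \(\phi\) is still small.
\end{enumerate}
Without this ordering of choices, your claim that ``only a compact neighborhood matters'' is not justified.
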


In this paper, we prove the three-factor statement and give a counterexample to the
two-factor statement, even with nonnegative scalar curvature in place
of positive scalar curvature.

\begin{theorem}\label{thm:three-factors}
Let \(X_i^{n_i}\), \(i=1,2,3\), be connected noncompact smooth manifolds
without boundary. Then \(X_1\times X_2\times X_3\) admits a complete
Riemannian metric \(g\) such that
\[
  \inf_{X_1\times X_2\times X_3}\Sc_g>0.
\]
By rescaling, the metric may be chosen with \(\Sc_g\geq1\).
\end{theorem}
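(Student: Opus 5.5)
The plan rests on the abstract's two ingredients: additivity of Morse indices on products, and a version of Das's open Morse--surgery construction that does not need orientations. Throughout, ``handle decomposition'' means one built from a proper Morse function.

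\textbf{Step 1: a handle decomposition with no top-dimensional handles.} Each \(X_i\) is connected, noncompact and without boundary, so it carries a proper Morse function \(f_i\colon X_i\to[0,\infty)\) with no critical points of index \(n_i\). This is standard: push the local maxima off to infinity along rays, i.e.\ cancel them against the noncompact ends. Then
\[
  F=f_1+f_2+f_3
\]
is a proper Morse function on \(X=X_1\times X_2\times X_3\), with \(N=n_1+n_2+n_3\). Its critical points are the triples \((p_1,p_2,p_3)\) of critical points of the factors, and Morse indices add:
\[
  \ind_F(p_1,p_2,p_3)=\ind(p_1)+\ind(p_2)+\ind(p_3).
\]
Since \(\ind(p_i)\le n_i-1\), every critical point of \(F\) has index at most \(N-3\). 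Equivalently, viewing \(F\) from the top, every handle of the dual decomposition has coindex at most \(N-3\). So \(X\) is built by attaching handles to \(F^{-1}([0,c])\)-type sublevel sets, where every handle is glued in by a surgery along a sphere of codimension \(\ge 3\).

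\textbf{Step 2: an open Gromov--Lawson construction (Das).} The idea is to build the metric inductively over a sequence of regular values \(c_k\to\infty\). On each slab \(F^{-1}([c_k,c_{k+1}])\), pass from the previous level set to the next through the handles of \(F\). Since the attaching spheres have codimension \(\ge3\), the Gromov--Lawson surgery construction yields a metric with \(\Sc\ge1\) on the slab. The new metric is a product cylinder near each boundary level, so consecutive slabs glue smoothly.

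To start the induction, the base piece is a neighborhood of the index-\(0\) critical points, i.e.\ a union of disks. Give each disk a torpedo metric (a round hemisphere capped onto a cylinder), which has \(\Sc\ge1\). Completeness is arranged by making every slab have length at least \(1\) in the \(F\)-direction; this forces every divergent path to have infinite length.

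The point of Das's argument is that Gromov--Lawson surgery is purely local. It never needs a spin structure, and the inductive procedure never needs orientations, so no orientability hypothesis on the \(X_i\) enters. Uniformity of the lower bound \(\Sc\ge1\) holds because each handle is treated by the same model construction with a fixed lower bound. The infinitely many handles then do not degrade the constant, since the scalar curvature bound is checked slab by slab. The final statement, that one may take \(\Sc_g\geq1\), follows by rescaling.

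\textbf{Main obstacle.} I expect the hard part to be Step 2 in the noncompact setting. The difficulty is keeping a uniform scalar curvature bound and product collars across infinitely many slabs, each of which may contain infinitely many handles. Handles at the same level are disjoint, so they can be treated independently with model metrics. I would first check that Das's construction only ever uses the codimension-\(\ge3\) condition, and never compactness or orientability, and then simply invoke it.
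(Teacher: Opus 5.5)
Your Step 1 agrees with the paper. The paper produces the maximum-free Morse exhaustion from a Napier--Ramachandran function with $\Delta f>0$ rather than by pushing maxima to infinity, but either works. Your plan of feeding $F$, with indices $\le N-3$, into Das's open Morse--surgery theorem is also the paper's.

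\textbf{The gap is in Step 2.} You must actually carry this step out, since Das states his theorem only for orientable manifolds, and your description of how the uniform bound arises is wrong. The Gromov--Lawson trace theorem takes the incoming metric $h_j$ on $F^{-1}(c_j)$, which is fixed by the previous slab, and returns a metric of positive scalar curvature on the slab. It is not a fixed model construction: the bending near the attaching spheres depends on the geometry of $h_j$ there. So all you get is some $\mu_j>0$ from compactness of the slab, and nothing prevents $\mu_j\to0$. Nor can you simply rescale the slab to raise $\mu_j$, because that turns its incoming boundary metric into $\rho_j^2h_j$, which no longer matches the previous slab. Also, by properness each slab is compact with finitely many critical points, and this compactness is exactly what the argument uses. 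So ``never uses compactness'' is the wrong thing to check.

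\textbf{The missing idea is the scaling collar.} Rescale the $j$-th trace by $\rho_j^2=\min\{1,\mu_j/(2\sigma)\}$ so that it has $\Sc\ge2\sigma$. Then bridge from $h_j$ to $\rho_j^2h_j$ by a warped product $ds^2+e^{-2u(s)}h_j$ on $Y_j\times[0,L_j]$, with $u$ increasing slowly from $0$ to $-\log\rho_j$. Its scalar curvature is $e^{2u}\Sc_{h_j}-m(m+1)(u')^2+2mu''$. Shrinking only increases the first term, and the derivative terms are $O(L_j^{-2})$, so a long enough collar keeps $\Sc\ge\sigma$. These collars, each of length $\ge1$, also give completeness. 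The new boundary metric $\rho_j^2\widehat h_{j+1}$ again has $\Sc\ge2\sigma$, so the induction continues.

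Without this step, or a quantitative Gromov--Lawson trace theorem that you would have to prove, your argument yields only a complete metric of positive scalar curvature, not uniformly positive scalar curvature.
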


The proof begins with a proper Morse function without local maxima on
each factor. On the product, the sum of these functions has critical
indices at most \(N-3\), where \(N=n_1+n_2+n_3\). Its handle
attachments therefore induce surgeries of codimension at least three
on the regular level sets. The surgery-trace construction gives positive
scalar curvature on each compact cobordism between levels. Following
Das \cite{Das2026}, we rescale these traces and join them by sufficiently long collars
to obtain both a uniform scalar-curvature bound and completeness.

For two factors, the index estimate gives only \(N-2\), so this
construction does not apply. In Section~\ref{sec:two-factors}, we give
a product of two open manifolds that admits no complete metric of
nonnegative scalar curvature.

Unless otherwise stated, manifolds are smooth and without boundary.
We use \emph{open} to mean noncompact and without boundary.

\bigskip

\noindent \textbf{Acknowledgements:} We would like to thank Zhizhang Xie and Bo Zhu for
very helpful comments.

\bigskip

\noindent \textbf{AI statement:} GPT 5.6 Sol and GPT 6 Astra were used to find
references and polish the exposition. The authors take full responsibility for
the results presented in the paper.

\section{Preliminaries}

\begin{definition}\label{def:Morse-exhaustion}
A smooth function \(f\colon V\to[0,\infty)\) is \emph{proper} if
inverse images of compact sets are compact, or equivalently if every
sublevel set \(f^{-1}([0,a])\) is compact.

A point \(p\in V\) is \emph{critical} if \(df_p=0\). At a critical
point, the Hessian \(\Hess_p f\) is a well-defined symmetric bilinear
form on \(T_pV\). The point is \emph{nondegenerate} if this form is
nondegenerate, and \(f\) is a \emph{Morse function} if all its critical
points are nondegenerate. A proper Morse function bounded below is
called a \emph{Morse exhaustion}. The \emph{Morse index}
\(\ind_f(p)\) is the number of negative eigenvalues of \(\Hess_p f\),
counted with multiplicity, or equivalently the maximal dimension of a
subspace on which \(\Hess_p f\) is negative definite.

A number \(a\) is a \emph{regular value} of \(f\) if \(df_p\neq0\)
for every \(p\in f^{-1}(a)\); otherwise it is a \emph{critical value}.
\end{definition}

\begin{remark}\label{rem:Morse-standard-facts}
By the Morse lemma \cite[Lemma~2.2]{Mil63}, a critical point \(p\)
of index \(\lambda\) has local coordinates in which
\[
  f(x)=f(p)-x_1^2-\cdots-x_\lambda^2
              +x_{\lambda+1}^2+\cdots+x_n^2.
\]
In particular, nondegenerate critical points are isolated. If \(a\)
is a regular value of a proper function \(f\), the regular level set
theorem \cite[Corollary~5.14]{Lee13} shows that \(f^{-1}([0,a])\)
is a compact manifold with boundary \(f^{-1}(a)\).
\end{remark}

\begin{proposition}\label{prop:Morse-exhaustion-no-max}
Every connected noncompact smooth manifold \(V^n\) admits a proper
Morse function \(f\colon V\to[0,\infty)\) with no local maxima.
In particular, every critical point of \(f\) has index at most \(n-1\).
\end{proposition}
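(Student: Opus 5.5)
We start from any proper Morse function and then cancel its local maxima by pushing them off to infinity. Since \(V\) is a manifold, it admits a proper smooth function \(h\colon V\to[0,\infty)\); for example \(h=\sum_k k\,\phi_k\) for a partition of unity \(\{\phi_k\}\) subordinate to a locally finite cover by relatively compact sets. Morse functions are dense in the strong (Whitney) \(C^2\) topology. Hence we may perturb \(h\) by less than \(1\) in \(C^0\) and obtain a proper Morse function \(f_0\), which we then shift so that \(f_0\ge 0\). By properness, each sublevel set contains finitely many critical points, so the critical set \(\{p_1,p_2,\dots\}\) is discrete and countable. The index \(n\) critical points are precisely the local maxima, and these are what we must eliminate.

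The key step uses noncompactness and connectedness. For each local maximum \(p\), we choose a smooth properly embedded ray \(\gamma_p\colon[0,\infty)\to V\) with \(\gamma_p(0)=p\). Such a ray exists because \(V\) is connected and noncompact: we take a path from \(p\) through points escaping every compact set, and make it embedded using \(n\ge2\). The case \(n=1\) is trivial, since \(V\cong\R\) and \(f(x)=x^2\) works. We choose the rays pairwise disjoint and avoiding all other critical points, with the family \(\{\gamma_p\}\) locally finite. This is possible by inductively choosing rays in the complement of the previously chosen ones, using general position in dimension \(n\ge2\) and properness of \(f_0\) to keep local finiteness. Let \(U_p\) be a thin closed tubular neighborhood of \(\gamma_p\), with the \(U_p\) pairwise disjoint and locally finite. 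Since \(U_p\) is diffeomorphic to a closed half-space \([0,\infty)\times D^{n-1}\), or equivalently to a closed ball with one boundary point removed, it contains no topology.

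Next, we modify \(f_0\) on each \(U_p\). Inside \(U_p\), we replace \(f_0\) by a function that agrees with \(f_0\) near \(\partial U_p\), has no critical points in \(U_p\), and remains proper. Concretely, in coordinates \((t,y)\in[0,\infty)\times D^{n-1}\) we interpolate between \(f_0\) near \(|y|=1\) and a function with a nonvanishing \(t\)-derivative. The target is \(F(t,y)=\psi(|y|)f_0+(1-\psi(|y|))\,(C(t)+\text{const})\), where \(C\) is increasing fast enough that \(\partial_t F>0\) wherever \(\psi<1\). To arrange this, we first reparametrize so that \(f_0\circ\gamma_p\) is increasing for large \(t\), or simply dominate it with \(C\). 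Near the endpoint \(t=0\), where \(p\) sits, we use the standard cancellation picture: a maximum removed along a ray is the noncompact analogue of cancelling it against a \(1\)-handle at infinity. Explicitly, the region \(U_p\) is a half-infinite tube, and on it the gradient-like flow can be taken to be a translation toward infinity, so no critical point is needed.

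The main obstacle is making this interpolation rigorous while controlling where new critical points appear in the collar region \(\{0<\psi<1\}\). The cleanest way I would handle it is via the gradient-like vector field instead. Choose a gradient-like vector field \(\xi\) for \(f_0\), and modify it inside \(U_p\) to a nonvanishing field pointing along the tube toward infinity that still agrees with \(\xi\) near \(\partial U_p\). Then invoke the standard fact, as in Milnor's h-cobordism lectures, that a Morse function is determined up to modification by a compatible gradient-like field together with values on level hypersurfaces; this lets us construct \(f\) with the new field as gradient-like field. Properness of \(f\) holds because \(f\ge f_0-c\) outside a compact set on each tube, with values tending to infinity along the tube. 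Local finiteness of the modifications ensures \(f\) is smooth and proper globally. Finally, the remaining critical points are those of \(f_0\) other than local maxima, so every index is at most \(n-1\). If we want \(f\ge0\), we add a constant.
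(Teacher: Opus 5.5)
Your argument fails at its central step, and the problem is more than missing detail: the tube modification you ask for does not exist in general. You want \(f\) to agree with \(f_0\) off \(\bigcup_pU_p\), to remain a proper exhaustion, and to have as critical set exactly the critical points of \(f_0\) that are not maxima.

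Take \(V=\R^2\) and \(f_0(x,y)=(x^2+y^2-1)^2+\varepsilon x\) with \(\varepsilon>0\) small. This is a proper Morse function, bounded below, with exactly three critical points, all on the \(x\)-axis: a maximum near the origin, a minimum near \((-1,0)\), and a saddle near \((1,0)\). Your procedure would output a proper Morse function, bounded below, with exactly one critical point of index \(0\) and one of index \(1\). By the basic theorem of Morse theory, \(\R^2\) would then be homotopy equivalent to a CW complex with one \(0\)-cell and one \(1\)-cell, that is, to \(S^1\), which is impossible.

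More generally, if a proper Morse exhaustion has finitely many critical points, then \(\sum_p(-1)^{\ind(p)}=\chi(V)\). So a maximum can never simply be deleted; it has to disappear together with other critical points, typically one of index \(n-1\). Consequently, in this example no choice of \(\psi\) and \(C\) in your interpolation, and no nonvanishing replacement of the gradient-like field on \(U_p\), can avoid creating a critical point in \(U_p\) or destroying properness.

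The appeal to Milnor's h-cobordism lectures does not help. There, a function is rebuilt from a modified gradient-like field only on a region between two level sets in which every trajectory runs from the lower level to the upper one. Your tube is not of this form: the ray must leave the maximum \(p\) downhill before climbing to infinity. There is also a smaller issue: in your coordinates \([0,\infty)\times D^{n-1}\), the point \(p=(0,0)\) lies on \(\partial U_p\), where \(f\) is supposed to equal \(f_0\).

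A topological proof along these lines would have to cancel each maximum against an index-\((n-1)\) critical point joined to it by a single gradient trajectory, as in Milnor's first cancellation theorem. It would use connectedness and noncompactness to supply such partners, and would have to organize infinitely many such moves in a locally finite way. That is substantially more delicate than your sketch.

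The paper sidesteps all of this analytically. Napier--Ramachandran give a smooth Morse function with \(f>\rho\) and \(\Delta f>\rho>0\) for a continuous exhaustion \(\rho\). Then \(f\) is proper. At a local maximum the Hessian would be negative semidefinite, forcing \(\Delta f\le 0\), so there are no local maxima.
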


\begin{proof}
Fix a background Riemannian metric on \(V\) and a positive continuous
exhaustion \(\rho\colon V\to[0,\infty)\). By a result of Napier and
Ramachandran \cite[Corollary~2.1]{NapierRamachandran2004}, there is a
smooth Morse function \(f\) satisfying
\[
  f>\rho,
  \qquad
  \Delta f>\rho>0.
\]
Each sublevel set of \(f\) is a closed subset of the corresponding
compact sublevel set of \(\rho\), so \(f\) is proper. At a local
maximum, the Hessian would be negative semidefinite and hence
\(\Delta f\leq0\), a contradiction. Thus \(f\) has no local maxima
and, being Morse, has no critical points of index \(n\).
\end{proof}

\begin{proposition}\label{prop:product-index-bound}
For \(i=1,\ldots,r\), let \(X_i^{n_i}\) carry a proper Morse exhaustion
\(f_i\colon X_i\to[0,\infty)\) with no critical points of index
\(n_i\). Set \(N=\sum_{i=1}^r n_i\). Then
\[
  F\colon X_1\times\cdots\times X_r\longrightarrow[0,\infty),
  \qquad
  F(x_1,\ldots,x_r)=\sum_{i=1}^r f_i(x_i),
\]
is a proper Morse exhaustion. At every critical point
\(p=(p_1,\ldots,p_r)\),
\[
  \ind_F(p)=\sum_{i=1}^r\ind_{f_i}(p_i)\leq N-r.
\]
\end{proposition}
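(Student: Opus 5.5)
The plan has three parts: properness, critical points and Hessian, and the index count.

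\emph{Properness.} Since each \(f_i\geq0\), the sublevel set \(F^{-1}([0,a])\) lies in \(\prod_i f_i^{-1}([0,a])\). Each factor here is compact because \(f_i\) is proper, so the product is compact by Tychonoff (finite products suffice). The sublevel set \(F^{-1}([0,a])\) is closed, since \(F\) is continuous, and is therefore compact. Thus \(F\) is proper and bounded below by \(0\).

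\emph{Critical points and Hessian.} The differential splits along \(T_pX=\bigoplus_i T_{p_i}X_i\) as \(dF_p=\sum_i (df_i)_{p_i}\circ d\pi_i\). Hence \(dF_p=0\) if and only if \((df_i)_{p_i}=0\) for every \(i\), so the critical points of \(F\) are exactly the tuples of critical points of the \(f_i\). At such a point I would work in product coordinates, where \(F\) has no mixed second partial derivatives between different factors. The Hessian is then block diagonal:
\[
  \Hess_pF=\Hess_{p_1}f_1\oplus\cdots\oplus\Hess_{p_r}f_r .
\]
Alternatively, one can use the product Morse charts from Remark~\ref{rem:Morse-standard-facts}: in these charts \(F\) equals \(F(p)\) plus a sum of signed squares, which exhibits nondegeneracy and the index at once. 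A direct sum of nondegenerate symmetric forms is nondegenerate, so \(F\) is Morse.

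\emph{Index count.} The index of a block-diagonal form is the sum of the indices of its blocks, because the negative eigenvalues of a direct sum are the union of the blocks' negative eigenvalues, counted with multiplicity. This gives
\[
  \ind_F(p)=\sum_i\ind_{f_i}(p_i).
\]
By hypothesis no critical point of \(f_i\) has index \(n_i\), so \(\ind_{f_i}(p_i)\leq n_i-1\) for each \(i\). Summing over \(i\) yields \(\ind_F(p)\leq N-r\).

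None of these steps is a real obstacle. The only point that needs care is stating the Hessian block decomposition correctly, and at critical points the Hessian is intrinsic, so a coordinate computation is legitimate there.
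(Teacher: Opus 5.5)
Your proposal is correct and follows essentially the same argument as the paper. Both prove properness from the inclusion of \(F^{-1}([0,a])\) in the compact product of sublevel sets, then use the splitting of \(dF_p\), the block-diagonal Hessian \(\Hess_pF=\bigoplus_i\Hess_{p_i}f_i\), and additivity of the index under direct sums to get \(\ind_F(p)\leq N-r\).
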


\begin{proof}
Since the functions \(f_i\) are nonnegative,
\[
  F^{-1}([0,a])\subseteq\prod_{i=1}^r f_i^{-1}([0,a]).
\]
The right-hand side is compact, and the left-hand side is closed.
Thus \(F\) is proper.

Under the splitting \(T_p(X_1\times\cdots\times X_r)
=\bigoplus_iT_{p_i}X_i\), we have
\[
  dF_p(v_1,\ldots,v_r)=\sum_{i=1}^r(df_i)_{p_i}(v_i).
\]
Consequently, \(p\) is critical for \(F\) exactly when each \(p_i\)
is critical for \(f_i\). At such a point,
\[
  \Hess_pF=\bigoplus_{i=1}^r\Hess_{p_i}f_i.
\]
This direct sum is nondegenerate, and its index is the sum of the
indices of its summands. Hence \(F\) is Morse and
\[
  \ind_F(p)=\sum_{i=1}^r\ind_{f_i}(p_i)
  \leq\sum_{i=1}^r(n_i-1)=N-r.
\]
\end{proof}

\section{Surgery traces and scaling collars}

The surgery theorem of Schoen--Yau \cite{SY1979} and
Gromov--Lawson \cite{GL1980} preserves positive scalar curvature
under surgeries of codimension at least three. To construct a metric
on an open manifold, we need metrics on the cobordisms between regular
levels, not just on the levels themselves. We use the following trace
version, in the form given by Walsh \cite[p.~46]{Walsh2011}.
Here a compact cobordism from \(Y_-\) to \(Y_+\) is a compact
manifold \(W\) with \(\partial W=Y_-\sqcup Y_+\).

\begin{theorem}[Gromov--Lawson cobordism theorem]
\label{thm:trace}
Let \(W^N\) be a compact cobordism from the closed manifold \(Y_-\)
to the closed manifold \(Y_+\), with \(N\geq3\). Suppose that
\(W\) admits a Morse function \(f\colon W\to[0,1]\) such that
\[
  f^{-1}(0)=Y_-,\qquad f^{-1}(1)=Y_+,
\]
and that all critical points lie in the interior and have index at
most \(N-3\). For every positive-scalar-curvature metric \(h_-\)
on \(Y_-\), there is a positive-scalar-curvature metric \(\widehat G\)
on \(W\) satisfying
\[
  \widehat G=h_-+dt^2\quad\text{near }Y_-,
  \qquad
  \widehat G=h_++dt^2\quad\text{near }Y_+,
\]
for some positive-scalar-curvature metric \(h_+\) on \(Y_+\).
\end{theorem}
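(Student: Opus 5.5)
We prove the cobordism theorem by induction on the number of critical points of \(f\), using the standard Gromov--Lawson torpedo construction on the trace of each individual surgery. First we perturb \(f\) slightly, keeping its boundary values and the index bound, so that distinct critical points have distinct critical values; Morse functions with this property are dense. Choose regular values \(0=c_0<c_1<\dots<c_k=1\) with exactly one critical value in each interval \((c_{j-1},c_j)\). Then \(W\) is a union of elementary cobordisms \(W_j=f^{-1}([c_{j-1},c_j])\), and each \(W_j\) is the trace of a single surgery on \(f^{-1}(c_{j-1})\) of index \(\lambda_j\le N-3\). If \(f\) has no critical points, the gradient flow identifies \(W\) with \(Y_-\times[0,1]\), and we take \(\widehat G=h_-+dt^2\) and \(h_+=h_-\). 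Metrics of product form near the boundary glue smoothly along the common levels \(f^{-1}(c_j)\). So it suffices to treat one elementary cobordism, starting from the psc metric \(h\) produced on \(f^{-1}(c_{j-1})\) at the previous step.

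For an elementary cobordism \(W_j\) with critical point of index \(\lambda\), the level \(Y=f^{-1}(c_{j-1})\) has dimension \(N-1\). The descending sphere \(S^{\lambda-1}\subset Y\) has a trivialized normal bundle \(S^{\lambda-1}\times D^{N-\lambda}\), whose fiber dimension satisfies \(N-\lambda\ge 3\). We build the metric as follows.

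\emph{Step 1.} On \(Y\times[0,L]\), start from \(h+dt^2\). Near \(S^{\lambda-1}\times\{t\}\), deform the metric by bending along a curve \(\gamma\) in the \((r,t)\)-plane, where \(r\) is the distance to the sphere. This is the Gromov--Lawson argument: since the normal fiber has dimension \(N-\lambda\ge3\), the fibers are spheres \(S^{N-\lambda-1}\) of dimension at least \(2\), and their curvature \(\sim 1/r^2\) dominates all other terms as \(r\to0\). This keeps scalar curvature positive while the metric becomes, near the top, \(g_{S^{\lambda-1}}|_{\text{induced}}+g_{\mathrm{torp}}\) on a tubular neighborhood, with a round torpedo of small radius \(\delta\) in the fibers.

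\emph{Step 2.} Using a psc isotopy on the standard \(S^{\lambda-1}\)-factor (via Gromov--Lawson), move to the metric \(\epsilon^2 g_{S^{\lambda-1}}+g_{\mathrm{torp}}^{N-\lambda}(\delta)\) near the sphere. Then attach the standard handle \(D^\lambda\times D^{N-\lambda}\) carrying the metric \(g_{\mathrm{torp}}^\lambda(\epsilon)+g_{\mathrm{torp}}^{N-\lambda}(\delta)\), suitably bent in the cobordism direction. This metric has positive scalar curvature because the second factor has dimension at least \(3\), so \(g_{\mathrm{torp}}^{N-\lambda}\) alone already has positive scalar curvature bounded below by \(c/\delta^2\). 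The resulting manifold is diffeomorphic to \(W_j\) by the uniqueness of handle attachment up to diffeomorphism, given the framing determined by \(f\).

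\emph{Step 3.} Add a product collar \(h_++dt^2\) at the top.

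The main obstacle is Step 1. Making the bent metric on the trace itself psc requires controlling the second fundamental form terms in the Gauss formula, not just those on the level sets. This is exactly Walsh's detailed analysis, and following Walsh we would cite it for the curvature estimate rather than redo it.
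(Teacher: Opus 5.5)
Your proposal is correct and takes essentially the same approach as the paper's sketch: you split \(W\) into elementary cobordisms joined along product collars, handle each single handle by the Gromov--Lawson trace construction as in Walsh, and glue. One point you should state explicitly, as the paper does, is the index-zero case: there the descending sphere is empty and the elementary cobordism is a product collar plus a new disjoint \(N\)-ball with a torpedo metric (positive scalar curvature since \(N\geq3\)); your Step~2 covers this only by a degenerate reading, yet for \(N=3\) it is the only index the hypothesis allows, and such critical points do occur in the shells of the open Morse--surgery theorem.
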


We recall the handle-by-handle construction.

\begin{proof}[Proof sketch]
After perturbing \(f\) away from the boundary, assume that its
critical values are distinct. The corresponding handle decomposition
writes \(W\) as a finite succession of elementary cobordisms and
product regions. Start with \(h_-+dt^2\) on a collar of \(Y_-\).

An index-zero critical point introduces an \(N\)-ball. Give it a
torpedo metric, which has positive scalar curvature for \(N\geq3\)
and is equal near the boundary to
\[
  dt^2+\delta^2g_{\mathrm{rd}}^{N-1},
\]
where \(g_{\mathrm{rd}}^{N-1}\) is the unit round metric on
\(S^{N-1}\). Extend the metrics on the other components by products.

A handle of index \(1\leq\lambda\leq N-3\) is attached along
\(S^{\lambda-1}\times D^{N-\lambda}\) and induces a surgery of
codimension
\[
  (N-1)-(\lambda-1)=N-\lambda\geq3
\]
on the incoming level. The single-handle Gromov--Lawson trace
construction gives a positive-scalar-curvature
metric on this elementary cobordism, equal to the prescribed incoming
metric plus \(dt^2\) near the incoming boundary and product near the
outgoing boundary. Glue it to the preceding region along the matching
product collars and continue through the next product region.
Repeating this construction over all handles gives \(\widehat G\).
The incoming collar is unchanged, and the outgoing boundary metric
has positive scalar curvature because the metric is product there.
\end{proof}

Applying Theorem~\ref{thm:trace} successively gives positive scalar
curvature on the compact pieces of an exhaustion, but their curvature
lower bounds may tend to zero. We will rescale each trace before
attaching it. The following lemma supplies a collar between the old
boundary metric and its rescaled copy, with a uniform curvature bound
and an arbitrarily large length; compare Das \cite[Lemma~3.1]{Das2026}.

\begin{lemma}\label{lem:scaling-collar}
Let \((Y^m,h)\) be closed, with \(\Sc_h\geq2\sigma\) for some
\(\sigma>0\). Given \(0<\rho\leq1\) and \(\ell>0\), there are
\(L\geq\ell\) and a metric \(H\) on \(Y\times[0,L]\) such that
\(\Sc_H\geq\sigma\) and
\[
  H=h+ds^2\quad\text{near }s=0,
  \qquad
  H=\rho^2h+ds^2\quad\text{near }s=L.
\]
\end{lemma}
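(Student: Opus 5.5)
The plan is to use a warped product $H=\phi(s)^2h+ds^2$ on $Y\times[0,L]$, where $\phi$ is a slowly varying function that decreases from $1$ near $s=0$ to $\rho$ near $s=L$. For such a metric on $Y^m\times I$ the scalar curvature is
\[
  \Sc_H=\phi^{-2}\Sc_h-2m\frac{\phi''}{\phi}-m(m-1)\frac{(\phi')^2}{\phi^2}.
\]
Since $\phi\leq1$, the first term is at least $\Sc_h\geq2\sigma$. So it is enough to make the error terms $2m|\phi''|/\phi+m(m-1)(\phi')^2/\phi^2$ at most $\sigma$ everywhere.

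Making $\phi$ vary slowly achieves this. Fix once and for all a smooth nonincreasing function $\beta\colon\R\to[0,1]$ with $\beta=0$ on $(-\infty,1/3]$ and $\beta=1$ on $[2/3,\infty)$. Since $\rho\leq1$ we can set $\phi_L(s)=\rho^{\beta(s/L)}=\exp\bigl(\beta(s/L)\log\rho\bigr)$. Writing $c=|\log\rho|$, we get $\phi_L'/\phi_L=-c\,\beta'(s/L)/L$ and
\[
  \frac{\phi_L''}{\phi_L}=\Bigl(\frac{c\,\beta'(s/L)}{L}\Bigr)^2-\frac{c\,\beta''(s/L)}{L^2}.
\]
Both are $O(1/L)$ uniformly in $s$, with constants depending only on $c$, $m$ and $\beta$. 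Hence $\Sc_H\geq2\sigma-C/L$, and choosing $L\geq\max(\ell,C/\sigma)$ gives $\Sc_H\geq\sigma$. By construction $\phi_L=1$ on $[0,L/3]$ and $\phi_L=\rho$ on $[2L/3,L]$, so $H$ equals $h+ds^2$ near $s=0$ and $\rho^2h+ds^2$ near $s=L$, as required. When $\rho=1$ the product metric already works.

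The main obstacle is the warped-product scalar curvature formula and keeping track of its sign conventions. I would check it on $\phi=e^{s}$, which gives hyperbolic space when $h$ is flat: the formula yields $-2m-m(m-1)=-m(m+1)$, which is correct for $\R^{m+1}$ with curvature $-1$. With that check the rest is routine. The one thing to keep in mind is that for small $\rho$ the term $\phi^{-2}\Sc_h$ only helps, because $\Sc_h$ is positive.
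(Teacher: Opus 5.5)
Your proposal is correct and is essentially the paper's proof: your $\phi_L=\rho^{\beta(s/L)}$ is exactly the paper's $f=e^{-u}$ with $u=a\theta(s/L)$ and $a=-\log\rho$. The same warped-product formula and the same derivative bounds then give $\Sc_H\geq2\sigma-\sigma$.

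Two harmless slips need fixing. First, $\beta$ should be nondecreasing, although only $0\leq\beta\leq1$ is actually used. Second, the error terms are $O(1/L^2)$, so the bound $\Sc_H\geq2\sigma-C/L$ only holds for $L\geq1$; take $L\geq\max\{\ell,1,C/\sigma\}$, or $L\geq\max\{\ell,\sqrt{C/\sigma}\}$ as the paper does.
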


\begin{proof}
For \(\rho=1\), take \(L=\ell\) and \(H=h+ds^2\). Otherwise,
put \(a=-\log\rho>0\) and choose a smooth nondecreasing function
\(\theta\colon[0,1]\to[0,1]\) that is zero near \(0\) and one near
\(1\). For \(L\geq\ell\), set
\[
  u(s)=a\theta(s/L),\qquad f(s)=e^{-u(s)},\qquad
  H=ds^2+f(s)^2h.
\]
The warped-product scalar-curvature formula is
\[
  \Sc_H=f^{-2}\Sc_h
  -m(m-1)\left(\frac{f'}f\right)^2-2m\frac{f''}f.
\]
Substituting \(f'/f=-u'\) and \(f''/f=(u')^2-u''\) gives
\begin{equation}\label{eq:warped-scalar}
  \Sc_H=e^{2u}\Sc_h-m(m+1)(u')^2+2mu''.
\end{equation}
Let \(A_1=\lVert\theta'\rVert_\infty\) and
\(A_2=\lVert\theta''\rVert_\infty\). Then
\[
  \lVert u'\rVert_\infty\leq\frac{aA_1}{L},
  \qquad
  \lVert u''\rVert_\infty\leq\frac{aA_2}{L^2}.
\]
Choose
\[
  L\geq\max\left\{\ell,
  \sqrt{\frac{m(m+1)a^2A_1^2+2maA_2}{\sigma}}\right\}.
\]
Since \(u\geq0\) and \(\Sc_h\geq2\sigma\), equation
\eqref{eq:warped-scalar} yields
\[
  \Sc_H\geq2\sigma
  -m(m+1)\lVert u'\rVert_\infty^2
  -2m\lVert u''\rVert_\infty\geq\sigma.
\]
The required endpoint metrics follow from \(u=0\) near \(0\) and
\(u=a\) near \(L\).
\end{proof}

\section{Das's open Morse--surgery theorem}
\label{sec:das}

Das \cite[Theorem~1.2]{Das2026} proves the following result for
orientable manifolds. We follow the proof in Section~3.1 of his
paper, using Lemma~\ref{lem:scaling-collar} to keep track of the
collar lengths needed for completeness. The construction does not
require orientability.

\begin{theorem}[Open Morse--surgery theorem]\label{thm:open-morse}
Let \(M^N\) be a connected open manifold, with \(N\geq3\). Suppose
that, for some \(a\in\R\), there is a proper Morse function
\[
  f\colon M\longrightarrow[a,\infty)
\]
whose critical points all have index at most \(N-3\). Then \(M\)
admits a complete metric of uniformly positive scalar curvature.
\end{theorem}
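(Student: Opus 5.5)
We will build the metric one slab at a time, using the critical values of \(f\) to cut \(M\) into compact cobordisms. By Remark~\ref{rem:Morse-standard-facts}, the critical points of the proper Morse function \(f\) are isolated, so only finitely many lie in each compact sublevel set and the critical values form a discrete set. Since \(M\) is noncompact, \(f\) is unbounded above. Choose regular values \(c_0<c_1<c_2<\cdots\to\infty\) with \(c_0\) below \(\min f\), and such that each interval \((c_{k-1},c_k)\) contains at least one critical value. Put \(W_k=f^{-1}([c_{k-1},c_k])\); this is a compact cobordism from \(Y_{k-1}=f^{-1}(c_{k-1})\) to \(Y_k=f^{-1}(c_k)\), with \(Y_0=\emptyset\). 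Rescaling \(f|_{W_k}\) affinely onto \([0,1]\) gives a Morse function satisfying the hypotheses of Theorem~\ref{thm:trace}, since all its indices are at most \(N-3\).

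The induction carries a positive-scalar-curvature metric \(h_k\) on \(Y_k\) with \(\Sc_{h_k}\geq 2\), together with a complete-so-far metric on \(f^{-1}([c_0,c_k])\), modified by collars, that is product near its boundary, equal to \(h_k+dt^2\) there, and has \(\Sc\geq1\). At stage \(k\), apply Theorem~\ref{thm:trace} to \(W_{k}\) with incoming metric \(h_{k-1}\). For \(k=1\) the incoming boundary is empty, so only torpedo caps appear at index-zero points. This produces \(\widehat G_k\) with \(\Sc_{\widehat G_k}\geq \epsilon_k>0\) by compactness, and an outgoing metric \(h_k'\). Set \(\rho_k^2=\min\{1,\epsilon_k/2\}\). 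Then \(\rho_k^{-2}\widehat G_k\) has scalar curvature at least \(2\), and its incoming boundary metric is \(\rho_k^{-2}h_{k-1}\). Read this the other way: insert the scaling collar of Lemma~\ref{lem:scaling-collar} on \(Y_{k-1}\), with \(\sigma=1\) and length \(L_k\geq 1\), going from \(h_{k-1}\) to \(\rho^2 h_{k-1}\) with \(\rho=\rho_k\), and then glue in \(\rho^2\cdot\rho_k^{-2}\widehat G_k\)? This bookkeeping does not work as written. The factors must instead go the other way: collar from \(h_{k-1}\) to \(\rho_k^{-2}h_{k-1}\) is an enlargement, which Lemma~\ref{lem:scaling-collar} does not allow. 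The fix is to apply Theorem~\ref{thm:trace} with incoming metric \(\rho^2 h_{k-1}\) for a \(\rho\leq1\) still to be chosen. Scaling the incoming metric down by \(\rho^2\) lets the output be rescaled by the same factor: take \(\rho^2\widehat G_k^{(1)}\), where \(\widehat G_k^{(1)}\) is the trace built from \(h_{k-1}\). Its scalar curvature is \(\rho^{-2}\Sc\geq\rho^{-2}\epsilon_k\), which is \(\geq2\) once \(\rho^2\leq\epsilon_k/2\). So at each stage: build the trace from \(h_{k-1}\), choose \(\rho_k\), insert the Lemma~\ref{lem:scaling-collar} collar from \(h_{k-1}\) down to \(\rho_k^2h_{k-1}\) (scalar curvature \(\geq1\), since \(\Sc_{h_{k-1}}\geq2\)), then glue \(\rho_k^2\widehat G_k^{(1)}\). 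Set \(h_k=\rho_k^2h_k'\), whose scalar curvature is \(\geq2\) because the metric is product near \(Y_k\).

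Gluing is smooth because every piece is an exact product \(h+dt^2\) near the matching boundary. The resulting manifold is diffeomorphic to \(M\), since inserting collars \(Y\times[0,L]\) does not change the diffeomorphism type. Orientability is never used: Theorem~\ref{thm:trace} and Lemma~\ref{lem:scaling-collar} are purely local or product constructions.

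The remaining point, and the main one, is completeness. Each scaled trace may be tiny, so length has to come from the collars. Every ray leaving to infinity must cross infinitely many collars \(Y_{k}\times[0,L_k]\), each of length \(L_k\geq1\). Any path from \(Y_{k-1}\times\{0\}\) to \(Y_{k-1}\times\{L_k\}\) has length at least \(L_k\), because \(H=ds^2+f^2h\) and so \(|ds|\) is bounded by the speed. So a set of bounded diameter meets only finitely many slabs. It is therefore contained in a compact union of slabs, and closed bounded sets are compact, which gives completeness by Hopf--Rinow. Finally, \(\Sc\geq1\) holds everywhere: on the collars by Lemma~\ref{lem:scaling-collar}, and on the traces by the choice \(\rho_k^2\leq\epsilon_k/2\).
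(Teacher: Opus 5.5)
Your final argument is the paper's proof: trace each shell via Theorem~\ref{thm:trace}, shrink it by \(\rho_k^2=\min\{1,\epsilon_k/2\}\), prepend a Lemma~\ref{lem:scaling-collar} collar from \(h_{k-1}\) down to \(\rho_k^2h_{k-1}\), and get completeness from the collars of length \(\geq1\); starting from \(Y_0=\emptyset\) instead of the paper's torpedo balls \(K_0\) is a harmless variant. Delete the abandoned first attempt (enlarging by \(\rho_k^{-2}\) lowers scalar curvature, so that claim was false), and drop the requirement that each \((c_{k-1},c_k)\) contain a critical value, which is never used and cannot be met when \(f\) has finitely many critical points, e.g.\ \(f=|x|^2\) on \(\R^N\).
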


\begin{proof}
Fix \(\sigma>0\). Since \(f\) is proper and bounded below, it attains
its minimum \(m\) at finitely many points \(p_1,\ldots,p_s\), all of
index zero. Choose disjoint Morse coordinate neighborhoods \(U_i\)
of these points, in which \(f=m+|x|^2\). For sufficiently small
\(\varepsilon>0\),
\[
  f^{-1}([m,m+\varepsilon])\subseteq\bigcup_{i=1}^sU_i.
\]
Indeed, otherwise properness would give a sequence outside this union
converging to a global minimum outside the union, a contradiction.
Choosing a regular value \(a_0>m\) sufficiently close to \(m\), we
therefore obtain a sublevel set \(K_0=f^{-1}([a,a_0])\) consisting
of finitely many disjoint \(N\)-balls.

Extend \(a_0\) to a sequence of regular values
\(a_0<a_1<a_2<\cdots\) tending to infinity, and write
\[
  K_j=f^{-1}([a,a_j]),\qquad
  Y_j=\partial K_j=f^{-1}(a_j),\qquad
  W_j=f^{-1}([a_j,a_{j+1}]).
\]
The sets \(K_j\) form a compact exhaustion of \(M\). Each shell
\(W_j\) contains finitely many critical points, since its critical
set is closed and discrete. Equip the balls in \(K_0\) with torpedo
metrics, product near the boundary and scaled to have scalar curvature
at least \(2\sigma\). The induced metric \(h_0\) on \(Y_0\) then
also satisfies \(\Sc_{h_0}\geq2\sigma\).

Suppose the metric has been constructed on \(K_j\), is product near
\(Y_j\), and induces a boundary metric \(h_j\) with
\(\Sc_{h_j}\geq2\sigma\). Apply Theorem~\ref{thm:trace} to
\(W_j\), with incoming metric \(h_j\), to obtain a metric
\(\widehat G_j\) of positive scalar curvature. Write
\(\widehat h_{j+1}\) for its outgoing boundary metric. Compactness
gives a constant \(\mu_j>0\) such that
\(\Sc_{\widehat G_j}\geq\mu_j\). Set
\[
  \rho_j=\min\left\{1,\sqrt{\frac{\mu_j}{2\sigma}}\right\},
  \qquad
  h_{j+1}=\rho_j^2\widehat h_{j+1}.
\]
Then \(\rho_j^2\widehat G_j\) has scalar curvature at least
\(2\sigma\), with incoming boundary metric \(\rho_j^2h_j\) and
outgoing boundary metric \(h_{j+1}\). In the rescaled normal
coordinate \(s=\rho_jt\), its collars take the form
\[
  \rho_j^2h_j+ds^2
  \qquad\text{and}\qquad
  h_{j+1}+ds^2.
\]
In particular, \(\Sc_{h_{j+1}}\geq2\sigma\).

Lemma~\ref{lem:scaling-collar}, applied with \(\ell=1\), gives a
collar from \(h_j\) to \(\rho_j^2h_j\), of length \(L_j\geq1\)
and scalar curvature at least \(\sigma\). Attach this collar to
the incoming boundary of the scaled trace. Inserting a boundary
collar does not change the diffeomorphism type of \(W_j\); the
identification can be chosen relative to the boundary. Thus the
metric extends over \(K_{j+1}\) without changing it on \(K_j\).
All gluings are smooth because the metrics are product near the
gluing hypersurfaces. Induction gives a smooth metric \(G\) on
\(M\) with \(\Sc_G\geq\sigma\).

To prove completeness, let \(\gamma\) be a piecewise smooth curve
that eventually leaves every compact subset of \(M\). Choose
\(j_0\) with \(\gamma(0)\in\Int K_{j_0}\). For each \(j\geq j_0\),
the curve must pass from \(K_j\) to the exterior of \(K_{j+1}\)
and hence cross a component of the collar inserted in \(W_j\).
On that collar, \(G=ds^2+f_j(s)^2h_j\), so a crossing from
\(s=0\) to \(s=L_j\) has length at least \(L_j\geq1\).
The collars have disjoint interiors, and therefore
\[
  L_G(\gamma)\geq\sum_{j\geq j_0}1=\infty.
\]
The divergent-curve criterion for completeness, equivalently
Hopf--Rinow, now shows that \(G\) is complete. None of the
construction requires orientability.
\end{proof}

\section{The three-factor theorem}
\label{sec:main-proof}

\begin{proof}[Proof of Theorem~\ref{thm:three-factors}]
For each \(i\), Proposition~\ref{prop:Morse-exhaustion-no-max}
provides a proper Morse exhaustion
\(f_i\colon X_i\to[0,\infty)\) whose critical indices are at most
\(n_i-1\). Set
\[
  N=n_1+n_2+n_3,\qquad F=f_1+f_2+f_3.
\]
By Proposition~\ref{prop:product-index-bound}, \(F\) is a proper
Morse exhaustion and every critical point \(p\) satisfies
\[
  \ind_F(p)\leq(n_1-1)+(n_2-1)+(n_3-1)=N-3.
\]
Theorem~\ref{thm:open-morse} therefore gives a complete metric of
uniformly positive scalar curvature. By rescaling, we get
\(\Sc\geq1\).
\end{proof}

\subsection{The two-factor case}\label{sec:two-factors}

A uniform positive lower bound is already impossible for $\R \times \R$.

\begin{proposition}\label{prop:two-factor-obstruction}
There is no complete Riemannian metric \(g\) on \(\R\times\R\)
with \(\inf\Sc_g>0\).
\end{proposition}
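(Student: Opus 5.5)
We use that \(\R\times\R=\R^2\) is two-dimensional, so scalar curvature equals \(2K\), with \(K\) the Gaussian curvature. Suppose \(g\) is a complete metric on \(\R^2\) with \(\Sc_g\geq 2\kappa>0\), so \(K\geq\kappa>0\) everywhere. We will derive a contradiction by showing that \(\R^2\) would have to be compact. Two routes are available, and we plan to give the first and mention the second as a check.

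\emph{Bonnet--Myers.} In dimension two, \(\Ric_g=Kg\geq\kappa g\). Since \(g\) is complete, the Bonnet--Myers theorem gives
\[
  \operatorname{diam}(\R^2,g)\leq\pi/\sqrt{\kappa}.
\]
By Hopf--Rinow, closed bounded subsets of a complete manifold are compact, so \((\R^2,g)\) itself is compact. This contradicts the noncompactness of \(\R^2\).

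\emph{Gauss--Bonnet (alternative).} Cohn-Vossen's inequality for complete surfaces gives \(\int K\,dA\leq2\pi\chi(\R^2)=2\pi\). The area is infinite because of completeness and noncompactness, since one can fit infinitely many disjoint balls of radius \(1\) along a ray. Hence \(\int K\,dA\geq\kappa\cdot\mathrm{Area}=\infty\), which is again a contradiction.

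There is no serious obstacle. The only points to check are that completeness is genuinely used, and that no uniform bound is needed beyond \(\inf\Sc_g>0\), which provides the constant \(\kappa\) in Bonnet--Myers. This matches the role of completeness in Problem~\ref{prob:products}(2) and shows that the three-factor count in Theorem~\ref{thm:three-factors} cannot be lowered to two for uniform bounds.
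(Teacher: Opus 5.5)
Your Bonnet--Myers argument is correct and is exactly the paper's proof: in dimension two \(\Ric_g=K_g g\geq\kappa g\), so a complete such surface is compact, which \(\R^2\) is not. The Cohn-Vossen aside is unnecessary, and its infinite-area step is incomplete as written: disjoint unit balls along a ray give infinite area only if their areas are uniformly bounded below, and you did not justify such a bound.
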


\begin{proof}
If \(\Sc_g\geq c>0\), the Gaussian curvature of \((\R^2,g)\)
satisfies \(K_g=\tfrac12\Sc_g\geq c/2\). Since
\(\Ric_g=K_gg\) in dimension two, Bonnet--Myers would make a
complete such surface compact, a contradiction.
\end{proof}

Dropping the uniform lower bound does not make the two-factor
statement true. The following recent result allows us to construct a counterexample to Problem~\ref{prob:products}.

\begin{theorem}[{\cite[Theorem~1.2]{YZ2026}}]
\label{thm:handlebody-obstruction}
Let \(H_\gamma\) be a compact three-dimensional handlebody of genus \(\gamma\).
If \(\Int H_\gamma\) admits a complete metric of nonnegative
scalar curvature, then \(\gamma\leq1\).
\end{theorem}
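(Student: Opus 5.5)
Since Theorem~\ref{thm:handlebody-obstruction} is quoted from \cite{YZ2026}, the paper only needs to cite it. If I were to prove it myself, I would split into a flat case and a positive case, and then play a structure theorem for complete three-manifolds of positive scalar curvature against the topology of \(\Int H_\gamma\).

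\emph{Step 1 (reduction to the flat and positive cases).} Let \(g\) be complete on \(\Int H_\gamma\) with \(\Sc_g\geq0\). If \(g\) is not Ricci-flat, I would use a Kazdan-type local Ricci-flow or conformal deformation, as in the noncompact version of the Kazdan--Bourguignon argument. This should produce a complete metric with \(\Sc>0\) everywhere, with no uniform bound required. If \(g\) is Ricci-flat, it is flat because the dimension is three. Then \(\pi_1(\Int H_\gamma)\) is the fundamental group of a complete flat manifold, which by Bieberbach is virtually abelian. The free group \(F_\gamma\) with \(\gamma\geq2\) is not virtually abelian, so this case is excluded.

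\emph{Step 2 (structure in the positive case).} I would then invoke the known classification of complete three-manifolds with \(\Sc>0\), due to Gromov--Lawson, Chodosh--Li and Wang, via \(\mu\)-bubbles and Schoen--Yau style arguments. It says that such a manifold is a possibly infinite connected sum of spherical space forms and copies of \(S^2\times S^1\). In the infinite case the connected sum is taken along a locally finite graph, possibly with a closed set removed. The upshot I want is this: every compact subset is contained in a compact submanifold \(K\) each of whose boundary components is a \(2\)-sphere. Equivalently, every end can be separated from compact sets by spheres.

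\emph{Step 3 (topological contradiction).} Now \(\Int H_\gamma\) has exactly one end, and that end has a neighborhood \(\Sigma_\gamma\times[0,\infty)\). Suppose \(K\supseteq H'\) is a compact submanifold with sphere boundary, where \(H'\) is a slightly shrunken copy of \(H_\gamma\). Gluing in balls and using irreducibility of handlebodies, \(K\) is a handlebody summed with balls. Consider a separating sphere in the collar region between \(\partial H'\) and infinity. Since \(\Sigma_\gamma\times\R\) is irreducible, that sphere bounds a ball. This forces \(\partial H'\), which is incompressible in \(\Sigma_\gamma\times[0,\infty)\), to lie inside a ball. That would make \(\pi_1(\Sigma_\gamma)\to\pi_1\) of the end neighborhood trivial, a contradiction for \(\gamma\geq1\).

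This argument must be sharpened to distinguish \(\gamma\leq1\). For \(\gamma=1\), \(\Int H_1=(S^2\times S^1)\setminus\{\mathrm{pt}\}\) does admit such metrics. So the correct invariant should be that each end of a positive-scalar-curvature manifold is ``sphere-like'' \emph{after} allowing the \(S^2\times S^1\) summands to be absorbed. I would formalize this as follows: \(\pi_1\) at infinity of the end is generated by elements coming from \(S^2\times S^1\) summands, hence is at most cyclic on each end. The end of \(\Int H_\gamma\) has \(\pi_1\) at infinity equal to \(\pi_1(\Sigma_\gamma)\), which is not cyclic for \(\gamma\geq1\). I then need to recheck the \(\gamma=1\) case, where \(\pi_1\) at infinity is \(\Z^2\). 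This is where I would expect to adjust the invariant, for instance to the image of \(\pi_1(\text{end})\to\pi_1(M)\), which is \(\Z\) for \(\gamma=1\) and is all of \(F_\gamma\), surjectively and non-cyclically, for \(\gamma\geq2\).

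The main obstacle is Step~3: making the end-topology invariant precise enough that it survives the infinite connected sum and the removal of closed sets in the classification, while still separating genus \(\leq1\) from genus \(\geq2\). I would try first the image of the end's fundamental group in \(\pi_1(M)\), controlled by a \(\mu\)-bubble exhaustion by sphere-bounded regions.
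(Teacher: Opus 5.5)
The paper gives no proof of this statement; it imports it from \cite{YZ2026}, so your opening sentence matches the paper. Your self-contained sketch, however, has a genuine gap at Step~2. The structure theorem you invoke (connected sums of spherical space forms and copies of \(S^2\times S^1\), hence exhaustions by compact pieces with spherical boundary) is a theorem about \emph{uniformly} positive scalar curvature. It is due to Bessi\`eres--Besson--Maillot under bounded geometry and to J.~Wang in general. It is false when \(\Sc>0\) holds only pointwise.

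The test case is \(\Int H_1\cong S^1\times\R^2\). A circle times a paraboloid is complete with \(\Sc>0\). Yet \(S^1\times\R^2\) is irreducible with an essential core circle, so your own Step~3 argument shows that no compact set containing the core lies in a compact submanifold with spherical boundary. If you allow ``a closed set removed'', as in \(S^1\times\R^2=(S^2\times S^1)\setminus(\{\mathrm{pt}\}\times S^1)\), the spherical-exhaustion upshot simply no longer follows. So Steps~2--3 prove too much. At best they show that \(\Int H_\gamma\), \(\gamma\geq1\), has no complete metric with \(\inf\Sc>0\); for \(\gamma=1\) this is the case \(n=1\) of part~(2) of the Gromov--Lawson theorem in the introduction. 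They cannot detect the difference between genus one and genus at least two, which is the whole point of the statement.

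Your proposed repair does not close the gap. First, it rests on a misidentification. \(\Int H_1\) is the open solid torus \(S^1\times\R^2\), which is aspherical with end \(T^2\times[0,\infty)\). It is not \((S^2\times S^1)\setminus\{\mathrm{pt}\}\), which has \(\pi_2\neq0\) and a spherical end.

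Second, the invariant you propose is asserted rather than derived. You suggest cyclicity of \(\pi_1\) at infinity, or of the image of \(\pi_1\) of an end neighborhood in \(\pi_1(M)\). As stated, this is false for complete positive-scalar-curvature three-manifolds. Take \(\R^3\) connected-summed with infinitely many copies of \(S^2\times S^1\) along a ray. It carries a complete metric with \(\inf\Sc>0\), yet for every neighborhood of its unique end the image of \(\pi_1\) in \(\pi_1(M)\) contains a free group of rank two. A true version would have to be specific to collared ends \(\Sigma_\gamma\times[0,\infty)\). Proving that such an end with \(\gamma\geq2\) is incompatible with a complete metric of \(\Sc\geq0\) is exactly the geometric content of \cite{YZ2026}, and nothing in your sketch supplies it.

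A lesser point concerns Step~1. The passage from \(\Sc\geq0\) to \(\Sc>0\) is not automatic on a complete noncompact manifold. The conformal and Ricci-flow versions of Kazdan's deformation need control at infinity that you do not have. Your flat case via Bieberbach is fine.
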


\begin{example}\label{ex:two-factor-counterexample}
Let \(D_1,D_2,D_3\) be pairwise disjoint closed disks in \(S^2\),
and set
\[
  X_1=(0,1),\qquad
  X_2=S^2\setminus(D_1\cup D_2\cup D_3),
\]
so that \(X_2\) is the interior of the compact surface
\[
  \overline X_2
  =S^2\setminus(\Int D_1\cup\Int D_2\cup\Int D_3).
\]
Then
\[
  X_1\times X_2\cong\Int\bigl([0,1]\times\overline X_2\bigr).
\]

 To see the topology of this compactification explicitly, choose a
handle decomposition of the pair of pants \(\overline X_2\) consisting
of one \(2\)-dimensional \(0\)-handle and two \(2\)-dimensional
\(1\)-handles. Taking the product with \([0,1]\) thickens the
\(0\)-handle to a \(3\)-ball and each \(1\)-handle to a
\(3\)-dimensional \(1\)-handle. Consequently, after rounding the
corners, \([0,1]\times\overline X_2\) is obtained from a \(3\)-ball by
attaching two \(1\)-handles, and hence is a compact handlebody \(H_2\)
of genus two.

 Thus
\(X_1\times X_2\cong\Int H_2\), and
Theorem~\ref{thm:handlebody-obstruction} rules out a complete metric
of nonnegative scalar curvature on this product.
\end{example}

\bibliographystyle{amsalpha}
\bibliography{references}

\end{document}